\newtheorem{thm}{Theorem}[section]
\newtheorem{prop}[thm]{Proposition}
\newtheorem{lem}[thm]{Lemma}
\theoremstyle{definition}
\newtheorem{defn}[thm]{Definition}
\newcommand{\A}{\mathcal{A}}
\newcommand{\Q}{\mathbb{Q}}
\newcommand{\Z}{\mathbb{Z}}
\begin{document}
\title{Sum formula for finite multiple zeta values}
\author{Shingo Saito}
\address{Faculty of Arts and Science, Kyushu University,
 744, Motooka, Nishi-ku, Fukuoka, 819-0395, Japan}
\email{ssaito@artsci.kyushu-u.ac.jp}
\author{Noriko Wakabayashi}
\address{Faculty of Engineering, Kyushu Sangyo University,
 3-1, Matsukadai 2-chome, Higashi-ku, Fukuoka, 813-8503, Japan}
\email{noriko@ip.kyusan-u.ac.jp}
\keywords{Finite multiple zeta value, sum formula}
\subjclass[2010]{Primary 11M32; Secondary 05A19}
\begin{abstract}
 The sum formula is one of the most well-known relations among multiple zeta values.
 This paper proves a conjecture of Kaneko
 predicting that an analogous formula holds for finite multiple zeta values.
\end{abstract}
\maketitle

\section{Introduction}
\subsection{Finite multiple zeta values}
The \emph{multiple zeta values} (MZVs) and \emph{multiple zeta-star values} (MZSVs)
are defined by
\begin{align*}
 \zeta(k_1,\dots,k_n)
 &=\sum_{m_1>\dots>m_n\ge1}\frac{1}{m_1^{k_1}\dotsm m_n^{k_n}},\\
 \zeta^{\star}(k_1,\dots,k_n)
 &=\sum_{m_1\ge\dots\ge m_n\ge1}\frac{1}{m_1^{k_1}\dotsm m_n^{k_n}}
\end{align*}
for $k_1,\dots,k_n\in\Z_{\ge1}$ with $k_1\ge2$.
They are both generalizations of the Riemann zeta values $\zeta(k)$
at positive integers.

Among a large number of variants of the MZ(S)Vs,
there has recently been growing interest
in \emph{finite multiple zeta(-star) values} (FMZ(S)Vs).
Set $\A=(\prod_{p}\Z/p\Z)/(\bigoplus_{p}\Z/p\Z)$,
where $p$ runs over all primes;
in other words, the elements of $\A$ are of the form $(a_p)_p$, where $a_p\in\Z/p\Z$,
and two elements $(a_p)$ and $(b_p)$ are identified if and only if
$a_p=b_p$ for all but finitely many primes $p$.
We shall simply write $a_p$ for $(a_p)$ since no confusion is likely.
The following definition is due to Zagier (see~\cite{KZ}):
\begin{defn}
 For $k_1,\dots,k_n\in\Z_{\ge1}$, we define
 \begin{align*}
  \zeta_{\A}(k_1,\dots,k_n)
  &=\sum_{p>m_1>\dots>m_n\ge1}\frac{1}{m_1^{k_1}\dotsm m_n^{k_n}}\in \A,\\
  \zeta_{\A}^{\star}(k_1,\dots,k_n)
  &=\sum_{p>m_1\ge\dots\ge m_n\ge1}\frac{1}{m_1^{k_1}\dotsm m_n^{k_n}}\in \A
 \end{align*}
 and call them \emph{finite multiple zeta(-star) values}.
\end{defn}

We spell out two easy properties of FMZ(S)Vs that will be used later;
for the proofs and more properties,
see \cite{Hoffman,KZ,Zhao} and the introduction of \cite{SW}.

\begin{prop}\label{prop:basic}
 \begin{enumerate}
  \item We have $\zeta_{\A}(k)=0$ for all $k\in\Z_{\ge1}$.
  \item For $k_1,k_2\in\Z_{\ge1}$, we have
   \[
    \zeta_{\A}(k_1,k_2)=\zeta_{\A}^{\star}(k_1,k_2)
    =(-1)^{k_1}\binom{k_1+k_2}{k_1}\frac{B_{p-k_1-k_2}}{k_1+k_2}.
   \]
 \end{enumerate}
\end{prop}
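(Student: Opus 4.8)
The plan is to treat the two parts separately, deriving both from Fermat's little theorem together with the classical vanishing of power sums modulo $p$.

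For part (1), I would fix a prime $p>k+1$ and use Fermat's little theorem in the form $m^{-k}\equiv m^{\,p-1-k}\pmod p$ for $1\le m\le p-1$. Then $\zeta_{\A}(k)\equiv\sum_{m=1}^{p-1}m^{\,p-1-k}\pmod p$, and since $1\le p-1-k\le p-2$ the exponent is a positive integer that is not a multiple of $p-1$. The standard fact that $\sum_{m=1}^{p-1}m^{\,j}\equiv0\pmod p$ for every integer $j\ge1$ that is not a multiple of $p-1$ then gives $\zeta_{\A}(k)\equiv0$ for all but finitely many $p$, hence $\zeta_{\A}(k)=0$ in $\A$.

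For part (2), I would first dispose of the star value: the only difference between $\zeta_{\A}^{\star}(k_1,k_2)$ and $\zeta_{\A}(k_1,k_2)$ is the diagonal $m_1=m_2$, whose contribution $\sum_{m}m^{-(k_1+k_2)}=\zeta_{\A}(k_1+k_2)$ vanishes by part (1); thus $\zeta_{\A}^{\star}(k_1,k_2)=\zeta_{\A}(k_1,k_2)$, and it suffices to evaluate the latter. Writing $H=k_1+k_2$, $a=p-1-k_1$ and $b=p-1-k_2$, Fermat's little theorem gives $\zeta_{\A}(k_1,k_2)\equiv\sum_{p-1\ge m_1>m_2\ge1}m_1^{\,a}m_2^{\,b}\pmod p$. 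The idea is to carry out the inner summation over $m_2$ by Faulhaber's formula $\sum_{m_2=1}^{m_1-1}m_2^{\,b}=\frac{1}{b+1}\sum_{j=0}^{b}\binom{b+1}{j}B_j\,m_1^{\,b+1-j}$ and then sum over $m_1$, so that $\zeta_{\A}(k_1,k_2)\equiv\frac{1}{b+1}\sum_{j=0}^{b}\binom{b+1}{j}B_j\sum_{m_1=1}^{p-1}m_1^{\,a+b+1-j}\pmod p$. Invoking the vanishing of power sums once more, the inner sum over $m_1$ is $\equiv-1$ precisely when $p-1$ divides $a+b+1-j$ and is $\equiv0$ otherwise; since $a+b+1-j\equiv 1-H-j\pmod{p-1}$, the unique surviving index in the range $0\le j\le b$ is $j=p-H$, at which the exponent equals $p-1$. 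This collapses the sum to $\zeta_{\A}(k_1,k_2)\equiv-\frac{1}{p-k_2}\binom{p-k_2}{k_1}B_{p-H}\pmod p$, using $b+1=p-k_2$ and $\binom{p-k_2}{p-H}=\binom{p-k_2}{k_1}$.

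It then remains to simplify the constant. Reducing the falling factorial modulo $p$ via $p-k_2-i\equiv-(k_2+i)$ yields $\frac{1}{p-k_2}\binom{p-k_2}{k_1}\equiv(-1)^{k_1-1}\frac{(H-1)!}{k_1!\,k_2!}=(-1)^{k_1-1}\frac{1}{H}\binom{H}{k_1}\pmod p$, and combining this with the sign $-1$ from the power sum gives exactly $\zeta_{\A}(k_1,k_2)\equiv(-1)^{k_1}\binom{H}{k_1}\frac{B_{p-H}}{H}\pmod p$, as claimed. I expect the main point requiring care --- rather than a genuine obstacle --- to be the well-definedness of the answer as an element of $\A$: one must check that $B_{p-H}$ is $p$-integral, which holds by the von Staudt--Clausen theorem because $p-H$ is not a multiple of $p-1$, and that for $p>H+1$ the surviving index satisfies $p-H\ge2$, so that the ambiguous value $B_1$ never contributes and the choice of convention in Faulhaber's formula is immaterial. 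The remaining binomial bookkeeping is routine.
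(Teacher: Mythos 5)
Your proof is correct. Note, however, that the paper does not prove Proposition~\ref{prop:basic} at all: it is quoted from the literature (see \cite{Hoffman,KZ,Zhao}), and your argument---Fermat's little theorem to turn inverse powers into positive powers, the vanishing of $\sum_{m=1}^{p-1}m^j \bmod p$ when $(p-1)\nmid j$, and Faulhaber's formula collapsing the double sum to the single surviving index $j=p-k_1-k_2$---is essentially the standard proof found in those references, so there is no methodological divergence to report. Two minor points worth tightening. First, to reduce the Faulhaber expansion termwise modulo $p$ you need \emph{every} $B_j$ with $0\le j\le p-1-k_2$ to be $p$-integral, not only $B_{p-k_1-k_2}$; this follows from the same von Staudt--Clausen reasoning you invoke, since each such $j$ is at most $p-2$ and hence not a positive multiple of $p-1$ (and $B_1=\pm\tfrac12$ is $p$-integral for odd $p$). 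Second, the paper's generating function $x/(1-e^{-x})$ gives $B_1=+\tfrac12$, whereas Faulhaber's formula with upper limit $m_1-1$, as you wrote it, belongs to the convention $B_1=-\tfrac12$; as you correctly observe, the discrepancy sits entirely in the $j=1$ term, whose contribution $\sum_{m_1}m_1^{a+b}$ vanishes modulo $p$ because $a+b\equiv -(k_1+k_2)\not\equiv0\pmod{p-1}$ for large $p$, so the convention issue is indeed harmless.
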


Here the numbers $B_m$ are the Bernoulli numbers given by
\[
 \sum_{m=0}^{\infty}B_m\frac{x^m}{m!}=\frac{x}{1-e^{-x}}\in\Q[[x]].
\]

\subsection{Sum formula}
The sum formula is a basic class of relations among MZ(S)Vs
and has been generalized in various directions.
For $k,n\in\Z$ with $1\le n\le k-1$, set
\[
 I_{k,n}=\{(k_1,\dots,k_n)\in\Z_{\ge1}^n\mid k_1+\dots+k_n=k,\;k_1\ge2\}.
\]

\begin{thm}[Sum formula \cite{Granville,Hoffman92}]
 For $k,n\in\Z$ with $1\le n\le k-1$, we have
 \begin{align*}
  \sum_{(k_1,\dots,k_n)\in I_{k,n}}\zeta(k_1,\dots,k_n)&=\zeta(k),\\
  \sum_{(k_1,\dots,k_n)\in I_{k,n}}\zeta^{\star}(k_1,\dots,k_n)
  &=\binom{k-1}{n-1}\zeta(k).
 \end{align*}
\end{thm}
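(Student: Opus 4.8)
The plan is to fix the depth $n$ and encode the whole family of identities, as $k$ varies, in a single generating function in the weight. For a fixed chain $m_1>\dots>m_n\ge1$ the admissible first index contributes $\sum_{k_1\ge2}(x/m_1)^{k_1}=x^2/\bigl(m_1(m_1-x)\bigr)$ and each later index contributes $\sum_{k_i\ge1}(x/m_i)^{k_i}=x/(m_i-x)$, so that
\[
 \phi_n(x):=\sum_{k>n}\biggl(\sum_{(k_1,\dots,k_n)\in I_{k,n}}\zeta(k_1,\dots,k_n)\biggr)x^k
 =\sum_{m_1>\dots>m_n\ge1}\frac{x^2}{m_1(m_1-x)}\prod_{i=2}^{n}\frac{x}{m_i-x},
\]
and the same computation with $\ge$ everywhere produces the star version $\phi_n^{\star}(x)$. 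In this language the theorem asserts the closed forms
\[
 \phi_n(x)=\sum_{k>n}\zeta(k)\,x^k,
 \qquad
 \phi_n^{\star}(x)=\sum_{k>n}\binom{k-1}{n-1}\zeta(k)\,x^k .
\]

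I would prove the first by induction on $n$, the content being the clean recursion $\phi_n(x)-\phi_{n+1}(x)=\zeta(n+1)\,x^{n+1}$; together with the base case $\phi_1(x)=x^2\sum_{m\ge1}1/\bigl(m(m-x)\bigr)=\sum_{k\ge2}\zeta(k)x^k$ this gives the claim at once. Stripping the factor $x^{n+1}$, the recursion is equivalent to the assertion that the rational multiple sum $\sum_{m_1>\dots>m_n}1/\bigl(m_1(m_1-x)\prod_{i\ge2}(m_i-x)\bigr)$, minus $x$ times its depth-$(n+1)$ analogue, collapses to the $x$-independent constant $\zeta(n+1)$ — an identity whose value at $x=0$ is already the classical $\zeta(2,1,\dots,1)=\zeta(n+1)$.

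The hard part will be exactly this collapse: peeling off the innermost variable $m_n$ turns the extra factor into the partial sum $x\sum_{j<m_n}1/(j-x)$, and a naive summation by parts leaves quadratic tails rather than a single term. I would instead decompose $\prod_{i}1/(m_i-x)$ by partial fractions over its distinct poles $x=m_i$, in the spirit of Euler's original evaluation of $\zeta(2,1)$; the resulting factors $1/(m_j-m_i)$ reorganize the multiple sum into single sums that resum to Riemann zeta values, forcing all the $x$-dependence except the constant $\zeta(n+1)$ to cancel. For the star formula the very same argument applies, except that the equalities $m_i=m_{i+1}$ turn simple poles into higher-order ones; the Taylor coefficients $1/(m-x)^r\mapsto\binom{k-1}{r-1}$ then produce precisely the binomial coefficient $\binom{k-1}{n-1}$, which is the number of compositions of $k$ into $n$ positive parts. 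Should the partial-fraction bookkeeping prove unwieldy, a fallback is to recognize $\phi_n$ and $\phi_n^{\star}$ as the depth-graded pieces of an Ohno--Zagier-type two-variable generating function with a known closed form, and read off the coefficients from there.
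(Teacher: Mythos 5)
The paper itself offers no proof of this theorem: it is quoted from \cite{Granville,Hoffman92}, and only its finite analogue is proved there. Your argument therefore has to stand alone, and it does not. The setup is sound: the generating functions $\phi_n$ and $\phi_n^{\star}$ are computed correctly, the restatement of the two identities as closed forms for them is correct, and the reduction of the non-star formula to the base case $\phi_1(x)=\sum_{k\ge2}\zeta(k)x^k$ together with the recursion $\phi_n(x)-\phi_{n+1}(x)=\zeta(n+1)x^{n+1}$ is valid. But that recursion \emph{is} the sum formula --- it carries the entire content of the theorem --- and you never prove it. ``Decompose by partial fractions \dots\ the resulting factors reorganize the multiple sum into single sums \dots\ forcing all the $x$-dependence except the constant $\zeta(n+1)$ to cancel'' names a strategy (essentially Granville's own) without executing any of it: which single sums arise, and why every $x$-dependent term cancels, is exactly the bookkeeping you defer, and it is where all the work lies. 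The proposed fallback --- reading the coefficients off an Ohno--Zagier-type closed form --- is an appeal to a strictly stronger known theorem, i.e.\ a citation, not a proof.

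The star half is not merely incomplete but structurally wrong as stated. ``The very same argument'' cannot apply, because the telescoping that drives your induction has no star analogue: granting the star formula, one has
\[
 \phi_n^{\star}(x)-\phi_{n+1}^{\star}(x)
 =n\zeta(n+1)x^{n+1}
 +\sum_{k>n+1}\biggl(\binom{k-1}{n-1}-\binom{k-1}{n}\biggr)\zeta(k)x^k,
\]
which is not a single monomial (already for $n=1$, $k=3$ the bracket equals $-1$), so there is no recursion of the form $\phi_n^{\star}-\phi_{n+1}^{\star}=c\,x^{n+1}$ to induct on, and you never formulate a replacement; the remark that higher-order poles contribute Taylor coefficients $\binom{k-1}{r-1}$ is not attached to any actual identity. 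A workable repair would be to expand each $\zeta^{\star}$ as an alternating sum of $\zeta$'s and count multiplicities by stars and bars --- the same device the paper uses for the finite values in Lemma~\ref{lem:S_S-star} --- thereby deducing the star formula from the non-star one; but no such argument appears in your proposal.
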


Kaneko \cite{Kaneko} conjectured the following analogous relations for FMZ(S)Vs:
\begin{align*}
 \sum_{(k_1,\dots,k_n)\in I_{k,n}}\zeta_{\A}(k_1,\dots,k_n)
 &=\biggl(1+(-1)^n\binom{k-1}{n-1}\biggr)\frac{B_{p-k}}{k},\\
 \sum_{(k_1,\dots,k_n)\in I_{k,n}}\zeta_{\A}^{\star}(k_1,\dots,k_n)
 &=\biggl((-1)^n+\binom{k-1}{n-1}\biggr)\frac{B_{p-k}}{k}.
\end{align*}
The aim of this paper is to prove the conjecture and its generalizations given below.

For $k,n,i\in\Z$ with $1\le i\le n\le k-1$, set
\[
 I_{k,n,i}=\{(k_1,\dots,k_n)\in\Z_{\ge1}^n\mid k_1+\dots+k_n=k,\;k_i\ge2\};
\]
note that $I_{k,n,1}=I_{k,n}$.

\begin{thm}[Main theorem]\label{thm:main}
 For $k,n,i\in\Z$ with $1\le i\le n\le k-1$, we have
 \begin{align*}
  \sum_{(k_1,\dots,k_n)\in I_{k,n,i}}\zeta_{\A}(k_1,\dots,k_n)
  &=(-1)^{i-1}\biggl(\binom{k-1}{i-1}+(-1)^n\binom{k-1}{n-i}\biggr)\frac{B_{p-k}}{k},\\
  \sum_{(k_1,\dots,k_n)\in I_{k,n,i}}\zeta_{\A}^{\star}(k_1,\dots,k_n)
  &=(-1)^{i-1}\biggl((-1)^n\binom{k-1}{i-1}+\binom{k-1}{n-i}\biggr)\frac{B_{p-k}}{k}.
 \end{align*}
\end{thm}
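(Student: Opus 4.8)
The plan is to package the FMZVs of a fixed depth into a generating series and to exploit the Fermat--Wilson congruence $\prod_{m=1}^{p-1}(X-m)\equiv X^{p-1}-1\pmod{p}$. Using $\frac{1}{m-x}=\sum_{\kappa\ge1}x^{\kappa-1}m^{-\kappa}$ and reading off elementary symmetric functions, I would first establish, for every sufficiently large prime $p$, the closed form
\[
 \sum_{n\ge0}\Biggl(\,\sum_{k_1,\dots,k_n\ge1}\zeta_{\A}(k_1,\dots,k_n)\,x^{k_1+\dots+k_n-n}\Biggr)t^n
 =\prod_{m=1}^{p-1}\Bigl(1+\tfrac{t}{m-x}\Bigr)\equiv\frac{(x-t)^{p-1}-1}{x^{p-1}-1},
\]
together with the companion identity $\prod_{m=1}^{p-1}\bigl(1-\tfrac{t}{m-x}\bigr)^{-1}\equiv\frac{x^{p-1}-1}{(x+t)^{p-1}-1}$ for the star values. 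Since $\binom{p-1}{j}\equiv(-1)^{j}\pmod p$, extracting the coefficient of $t^{n}$ yields $\sum_{k_1+\dots+k_n=k}\zeta_{\A}(k_1,\dots,k_n)=0$ in $\A$ (and likewise for $\zeta_{\A}^{\star}$) whenever $k<p-1$; that is, the sum over \emph{all} compositions of $k$ of depth $n$ vanishes. The apparent $t\mapsto-t$ duality between the two closed forms is what will ultimately account for the interchange of the two binomial terms between the $\zeta_{\A}$ and $\zeta_{\A}^{\star}$ formulas.

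Granting this vanishing, the constrained sum deviates from $0$ only through the terms excluded by the condition $k_i\ge2$, so
\[
 \sum_{(k_1,\dots,k_n)\in I_{k,n,i}}\zeta_{\A}(k_1,\dots,k_n)
 =-\sum_{\substack{k_1+\dots+k_n=k\\ k_i=1}}\zeta_{\A}(k_1,\dots,k_n),
\]
and it remains to evaluate the right-hand sum over compositions whose $i$-th part is forced to be $1$. To do so I would isolate the distinguished index $m_i=m$, writing the sum as $\sum_{m=1}^{p-1}\frac1m\,e_{i-1}\bigl(\tfrac{1}{\ell-x}:\ell>m\bigr)\,e_{n-i}\bigl(\tfrac{1}{\ell-x}:\ell<m\bigr)$ with $e_r$ the elementary symmetric polynomial, and introduce separate marker variables $s$ (for the block above $m$) and $u$ (for the block below $m$), so that the two symmetric functions become the coefficients of $s^{i-1}$ and $u^{n-i}$.

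Setting $D_m=\prod_{\ell\ge m}\bigl(1+\tfrac{s}{\ell-x}\bigr)\prod_{\ell<m}\bigl(1+\tfrac{u}{\ell-x}\bigr)$, the key engine is the telescoping identity $D_m-D_{m+1}=\tfrac{s-u}{m-x}\prod_{\ell>m}\bigl(1+\tfrac{s}{\ell-x}\bigr)\prod_{\ell<m}\bigl(1+\tfrac{u}{\ell-x}\bigr)$, which lets me sum over $m$ by parts against $\tfrac1m$. The boundary terms that arise are built from $\frac{(x-s)^{p-1}-1}{x^{p-1}-1}$ and its $u$-analogue, whose relevant coefficients vanish by the total-sum computation of the first paragraph; what survives is a single \emph{junction} sum $\sum_{m}\frac{D_m}{m(m-1)}$, a finite double harmonic sum of weight $k$ concentrated at the boundary between the two blocks. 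Extracting $[x^{k-n}s^{i-1}u^{n-i}]$ from it produces the binomial coefficients $\binom{k-1}{i-1}$ and $\binom{k-1}{n-i}$ (again through $\binom{p-1}{j}\equiv(-1)^{j}$), while the harmonic sum at the junction contributes the factor $B_{p-k}/k$ via the same Bernoulli-number input that underlies Proposition~\ref{prop:basic}(2).

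The star case I would run in exact parallel, replacing elementary by complete homogeneous symmetric functions and $\prod(1+\tfrac{t}{m-x})$ by $\prod(1-\tfrac{t}{m-x})^{-1}$; alternatively it can be deduced from the non-star case through the standard expression of $\zeta_{\A}^{\star}$ as a signed sum of $\zeta_{\A}$'s. The main obstacle I anticipate is the bookkeeping in the final step: correctly evaluating $\sum_{m}D_m/\bigl(m(m-1)\bigr)$ and matching its coefficient extraction to the precise combination $(-1)^{i-1}\bigl(\binom{k-1}{i-1}+(-1)^n\binom{k-1}{n-i}\bigr)$, keeping the two blocks' contributions separate and tracking all signs. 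A useful consistency check throughout is the reversal relation $\zeta_{\A}(k_1,\dots,k_n)=(-1)^{k}\zeta_{\A}(k_n,\dots,k_1)$, which exchanges $I_{k,n,i}$ with $I_{k,n,n+1-i}$ and is compatible with the asserted formula, the two sides agreeing precisely because $B_{p-k}/k=0$ in $\A$ when $k$ is even.
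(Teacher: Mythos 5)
Your opening moves are correct and genuinely different from the paper's route: the congruence $\prod_{m=1}^{p-1}(X-m)\equiv X^{p-1}-1\pmod p$ does give the two generating-function identities you state, and together with $\binom{p-1}{j}\equiv(-1)^j\pmod p$ it yields the vanishing, for $p>k+1$, of the sum of $\zeta_{\A}(k_1,\dots,k_n)$ over \emph{all} compositions of $k$ into $n$ positive parts; hence your reduction of the theorem to evaluating $\sum_{k_i=1}\zeta_{\A}(k_1,\dots,k_n)$ is valid, and the telescoping identity for $D_m$ is also correct. (The paper instead derives a recurrence in $(n,i)$ from the harmonic product, anchors it with the duality theorem for $\zeta_{\A}^{\star}$ plus Proposition~\ref{prop:basic}~(2), and passes between star and non-star values at $i=1$.)

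The gap is at the decisive step. First, a concrete technical slip: $1/(s-u)$ is not a formal power series in $s,u$, so after summation by parts the coefficient you need is $[s^{i-1}u^{n-i}]\bigl(N/(s-u)\bigr)=\sum_{r\ge0}[s^{i+r}u^{n-i-r}]N$, a sum along an antidiagonal of the numerator $N$, not the single coefficient $[s^{i-1}u^{n-i}]$ of the junction sum that you describe extracting; in particular the boundary term built from $D_1$ \emph{does} enter this antidiagonal (at $r=n-i$) and must be killed by the $k<p-1$ argument once more. Second, and more seriously, everything the theorem actually asserts --- the appearance of $B_{p-k}/k$ and of the two binomial coefficients $\binom{k-1}{i-1}$ and $\binom{k-1}{n-i}$ --- is deferred to the evaluation of $\sum_{m}D_m/\bigl(m(m-1)\bigr)$, for which no mechanism is given. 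After coefficient extraction that sum is a family of multiple harmonic sums of depth up to $n+2$ whose middle entries are tied to the \emph{consecutive} integers $m-1,m$; these are not finite multiple zeta values (stuffle-type manipulations handle equalities of summation indices, not consecutiveness), Proposition~\ref{prop:basic}~(2) does not apply to them, and reducing them mod $p$ to closed form is at least as hard as evaluating the $S_{k,n,i}$ you started with. Asserting that the correct binomials and the Bernoulli factor ``fall out'' of this sum is precisely the content of the theorem, so as written the argument begs the question at its core step; to complete it you would need an independent evaluation of these junction sums (for example, a systematic partial-fraction reduction of them to values $\zeta_{\A}(a,b)$), and that missing computation is where all the real work lies.
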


Setting $i=1$ gives Kaneko's conjecture.

\section{Proof of the main theorem}
For notational simplicity, we write the sums to be computed as
\[
  S_{k,n,i}=\sum_{(k_1,\dots,k_n)\in I_{k,n,i}}\zeta_{\A}(k_1,\dots,k_n),\qquad
  S_{k,n,i}^{\star}=\sum_{(k_1,\dots,k_n)\in I_{k,n,i}}\zeta_{\A}^{\star}(k_1,\dots,k_n)
\]
for $k,n,i\in\Z$ with $1\le i\le n\le k-1$.

\subsection{Recurrence relations}
We begin the proof by establishing recurrence relations
for $S_{k,n,i}$ and $S_{k,n,i}^{\star}$.
We will show the recurrence relations
by expressing products of FMZ(S)Vs as sums of FMZ(S)Vs
via the \emph{harmonic product} (see~\cite{Hoffman_algebra}).
Since explaining the harmonic product in its full generality
is unnecessarily cumbersome, we shall only illustrate it by examples.
If $k_1,k_2,l\in\Z_{\ge1}$, then Proposition~\ref{prop:basic} (1) shows that
\begin{align*}
 0&=\zeta_{\A}(k_1,k_2)\zeta_{\A}(l)\\
 &=\Biggl(\sum_{m_1>m_2}\frac{1}{m_1^{k_1}m_2^{k_2}}\Biggr)
  \Biggl(\sum_{m}\frac{1}{m^l}\Biggr)\\
 &=\Biggl(\sum_{m>m_1>m_2}+\sum_{m_1>m>m_2}+\sum_{m_1>m_2>m}
   +\sum_{m_1=m>m_2}+\sum_{m_1>m_2=m}\Biggr)\frac{1}{m_1^{k_1}m_2^{k_2}m^l}\\
 &=\zeta_{\A}(l,k_1,k_2)+\zeta_{\A}(k_1,l,k_2)+\zeta_{\A}(k_1,k_2,l)+\zeta_{\A}(k_1+l,k_2)+\zeta_{\A}(k_1,k_2+l),
\end{align*}
where $m_1$, $m_2$, and $m$ are all assumed to be positive integers less than $p$,
and similarly that
\[
 0=\zeta_{\A}^{\star}(l,k_1,k_2)+\zeta_{\A}^{\star}(k_1,l,k_2)+\zeta_{\A}^{\star}(k_1,k_2,l)
   -\zeta_{\A}^{\star}(k_1+l,k_2)-\zeta_{\A}^{\star}(k_1,k_2+l).
\]
An analogous procedure leads to the following lemma:
\begin{lem}\label{lem:recurrence_each}
 For $n\in\Z_{\ge2}$ and $k_1,\dots,k_{n-1},l\in\Z_{\ge1}$, we have
 \begin{align*}
  \sum_{j=1}^{n}\zeta_{\A}(k_1,\dots,k_{j-1},l,k_j,\dots,k_{n-1})
  +\sum_{j=1}^{n-1}\zeta_{\A}(k_1,\dots,k_{j-1},k_j+l,k_{j+1},\dots,k_{n-1})&=0,\\
  \sum_{j=1}^{n}\zeta_{\A}^{\star}(k_1,\dots,k_{j-1},l,k_j,\dots,k_{n-1})
  -\sum_{j=1}^{n-1}\zeta_{\A}^{\star}(k_1,\dots,k_{j-1},k_j+l,k_{j+1},\dots,k_{n-1})&=0.
 \end{align*}
\end{lem}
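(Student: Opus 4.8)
The plan is to deduce both identities from the single fact that $\zeta_{\A}(l)=\zeta_{\A}^{\star}(l)=0$ (Proposition~\ref{prop:basic}~(1)), exactly as in the two worked examples: harmonic-multiply a depth-$(n-1)$ value against the depth-one value $\zeta_{\A}(l)$, expand the product over the joint index set, and read off the terms that appear. The two cases differ only in how the single extra index $m$ interacts with the chain of the $m_i$.

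For the non-star identity I would start from $0=\zeta_{\A}(k_1,\dots,k_{n-1})\,\zeta_{\A}(l)$, write the factors as $\sum_{p>m_1>\dots>m_{n-1}\ge1}$ and $\sum_{p>m\ge1}$, and expand the product as one sum over $(m_1,\dots,m_{n-1},m)$. The decisive point is that, since $m_1>\dots>m_{n-1}$ is a \emph{strict} chain, every joint tuple falls into exactly one of $2n-1$ mutually exclusive cases: either $m$ lies strictly in one of the $n$ gaps $m_{j-1}>m>m_j$ (with the natural conventions at the two ends), or $m=m_j$ for a unique $j\in\{1,\dots,n-1\}$. The gap cases produce the $n$ insertion terms $\zeta_{\A}(k_1,\dots,k_{j-1},l,k_j,\dots,k_{n-1})$ and the equality cases the $n-1$ merge terms $\zeta_{\A}(k_1,\dots,k_{j-1},k_j+l,k_{j+1},\dots,k_{n-1})$, each with coefficient $+1$; summing yields the first identity.

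The star identity is the genuinely delicate part, because the chain $m_1\ge\dots\ge m_{n-1}$ is only weakly decreasing, so the analogous regions overlap and a naive partition overcounts. Here I would start from $0=\zeta_{\A}^{\star}(k_1,\dots,k_{n-1})\,\zeta_{\A}^{\star}(l)$ and introduce, for $j=1,\dots,n$, the insertion region $R_j$ cut out by $m_1\ge\dots\ge m_{j-1}\ge m\ge m_j\ge\dots\ge m_{n-1}$, whose sum is the $j$-th star insertion term. Their union, counted once, is the full joint index set, so inclusion--exclusion gives $0=\sum_{\emptyset\ne S\subseteq\{1,\dots,n\}}(-1)^{|S|+1}\Sigma_S$, where $\Sigma_S$ denotes the sum of the summand over $\bigcap_{j\in S}R_j$. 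I would then observe that $\bigcap_{j\in S}R_j$ depends only on $a=\min S$ and $b=\max S$: the overlapping inequalities force $m=m_a=\dots=m_{b-1}$, so $\Sigma_S$ is the star value in which positions $a,\dots,b-1$ and the inserted $l$ are all merged into one argument. Grouping the subsets $S$ by the pair $(a,b)$ and using $\sum_{T\subseteq U}(-1)^{|T|}=0$ for $U\ne\emptyset$, every block of length $\ge2$ (that is, $b>a+1$) has vanishing total coefficient, leaving precisely the $n$ insertions with sign $+$ (from $a=b$) and the $n-1$ adjacent merges $m=m_a$ with sign $-$ (from $b=a+1$); this is the second identity.

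The main obstacle is exactly this last cancellation in the star case: one must track all higher-order overlaps $R_a\cap R_b$ and verify that the alternating-sum coefficient of each ``long'' merged block vanishes, so that only single insertions and adjacent merges survive with the stated signs. The non-star case carries no such difficulty, since strictness turns the decomposition into an honest partition and all coefficients are automatically $+1$.
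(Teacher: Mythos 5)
Your proposal is correct and takes essentially the same route as the paper: the paper's proof is precisely to expand $\zeta_{\A}(k_1,\dots,k_{n-1})\zeta_{\A}(l)=0$ and $\zeta_{\A}^{\star}(k_1,\dots,k_{n-1})\zeta_{\A}^{\star}(l)=0$ via the harmonic product, which is what you do. Your inclusion--exclusion analysis of the overlapping regions $R_j$ (showing that long merged blocks cancel and only single insertions and adjacent merges survive, with signs $+$ and $-$) is simply a careful write-up of the expansion that the paper illustrates by example and otherwise leaves to the reader.
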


\begin{proof}
 Expand the left-hand sides of
 $\zeta_{\A}(k_1,\dots,k_{n-1})\zeta_{\A}(l)=0$ and
 $\zeta_{\A}^{\star}(k_1,\dots,k_{n-1})\zeta_{\A}^{\star}(l)=0$.
\end{proof}

\begin{prop}[Recurrence relations]\label{prop:recurrence}
 For $k,n,i\in\Z$ with $2\le i+1\le n\le k-1$, we have
 \begin{align*}
  (n-i)S_{k,n,i}+iS_{k,n,i+1}+(k-n)S_{k,n-1,i}&=0,\\
  (n-i)S_{k,n,i}^{\star}+iS_{k,n,i+1}^{\star}-(k-n)S_{k,n-1,i}^{\star}&=0.
 \end{align*}
\end{prop}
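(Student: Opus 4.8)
The plan is to deduce the recurrence relations directly from Lemma~\ref{lem:recurrence_each} by summing over an appropriate index set, rather than by re-deriving any harmonic product identities. The key observation is that the quantity $S_{k,n,i}$ is a sum over compositions $(k_1,\dots,k_n)$ of $k$ into $n$ positive parts subject to the single constraint $k_i\ge2$, and that the three families of terms appearing in Proposition~\ref{prop:recurrence} --- namely $S_{k,n,i}$, $S_{k,n,i+1}$, and $S_{k,n-1,i}$ --- are exactly the kinds of sums one obtains by inserting a new argument $l$ at various positions, or by augmenting an existing argument by $l$, into compositions of length $n-1$.

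First I would fix $k$ and $n$ and sum the first identity of Lemma~\ref{lem:recurrence_each} over all tuples $(k_1,\dots,k_{n-1})$ and all $l$ that are compatible with the desired constraint. Concretely, I expect the right choice to be: run $(k_1,\dots,k_{n-1})$ over compositions of $k-l$ into $n-1$ positive parts, with $l\ge1$, imposing that the entry in slot $i$ be at least $2$; summing $l$ and the composition so that the total weight is $k$ produces, on the left-hand side, a combination of the three target sums $S_{k,n,i}$, $S_{k,n,i+1}$, $S_{k,n-1,i}$, each weighted by the number of positions $j$ at which the relevant insertion or augmentation lands in the correct slot relative to $i$. The coefficients $n-i$, $i$, and $k-n$ should emerge from this bookkeeping: insertions $l$ falling strictly to one side of the distinguished index $i$ give the $S_{k,n,i}$ term, insertions on the other side (or at $i$ itself) give $S_{k,n,i+1}$, and the ``merge'' terms $k_j+l$ give the length-$(n-1)$ contribution $S_{k,n-1,i}$.

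The main obstacle will be getting the combinatorics of the coefficients exactly right, in particular tracking how the constraint $k_i\ge2$ in the length-$n$ sums translates, under insertion of $l$, into a constraint on the length-$(n-1)$ compositions, and correctly counting how many of the $n$ insertion slots yield a composition satisfying $k_i\ge2$ versus $k_{i+1}\ge2$. I would handle this by carefully partitioning the insertion positions $j\in\{1,\dots,n\}$ according to whether they lie before, at, or after the marked slot, and by observing that the merge sum $k_j+l$ automatically makes the merged entry at least $2$, which is why the factor $k-n$ (the number of ways to split off a value $l\ge1$ from the augmented entry) appears. The sign difference between the MZV and MZSV recurrences ($+(k-n)$ versus $-(k-n)$) comes directly from the corresponding sign in the two identities of Lemma~\ref{lem:recurrence_each}, so the star case follows by the identical argument with the signs of the merge terms reversed.

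Once the coefficient count is verified, the identities of Proposition~\ref{prop:recurrence} follow immediately, since each is just the reindexed total of an identity that is already zero term by term. I would therefore present the proof as: state the summation to be performed, carry out the reindexing, collect the three families of terms with their multiplicities, and conclude. The bulk of the writing is the multiplicity argument; everything else is a mechanical consequence of Lemma~\ref{lem:recurrence_each}.
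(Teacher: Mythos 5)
Your proposal is correct and follows essentially the same route as the paper: summing both identities of Lemma~\ref{lem:recurrence_each} over the index set $I_{k,n,i}$ (your ``compositions of $k-l$ with slot $i$ at least $2$, together with $l\ge1$'' is exactly that set), then counting that the $i$ insertion positions $j\le i$ produce $S_{k,n,i+1}$, the $n-i$ positions $j>i$ produce $S_{k,n,i}$, and the merge terms hit each element of $I_{k,n-1,i}$ with multiplicity $\sum_{j\ne i}(k_j'-1)+(k_i'-2)=k-n$. The paper's proof is precisely this bookkeeping, including the sign flip for the star case inherited from the lemma.
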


\begin{proof}
 Summing the equations in Lemma~\ref{lem:recurrence_each}
 over all $(k_1,\dots,k_{n-1},l)\in I_{k,n,i}$
 gives the desired recurrence relations.
 Indeed, the map
 \[
  (k_1,\dots,k_{n-1},l)\mapsto(k_1,\dots,k_{j-1},l,k_j,\dots,k_{n-1})
 \]
 defined on $I_{k,n,i}$ is a bijection onto $I_{k,n,i+1}$ for $j=1,\dots,i$
 and onto $I_{k,n,i}$ for $j=i+1,\dots,n$;
 under the map
 \[
  \bigl((k_1,\dots,k_{n-1},l),j\bigr)\mapsto(k_1,\dots,k_{j-1},k_j+l,k_{j+1},\dots,k_{n-1})
 \]
 from $I_{k,n,i}\times\{1,\dots,n-1\}$ to $I_{k,n-1,i}$,
 the preimage of each $(k_1',\dots,k_{n-1}')\in I_{k,n-1,i}$ is of cardinality
 \[
  \sum_{\substack{1\le j\le n-1\\j\ne i}}(k_j'-1)+(k_i'-2)=k-n.\qedhere
 \]
\end{proof}

\subsection{Computation of $S_{k,n,i}^{\star}$}
\begin{lem}[Initial values]\label{lem:sum_zeta-star_n=k-1}
 For $k,i\in\Z$ with $1\le i\le k-1$, we have
 \[
  S_{k,k-1,i}^{\star}=(-1)^{i-1}\binom{k}{i}\frac{B_{p-k}}{k}.
 \]
\end{lem}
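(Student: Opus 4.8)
The plan is to prove the lemma by first reducing it to a single finite multiple zeta-star value and then evaluating that value in closed form. The index set $I_{k,k-1,i}$ is a singleton: a tuple $(k_1,\dots,k_{k-1})\in\Z_{\ge1}^{k-1}$ with $k_1+\dots+k_{k-1}=k$ has total excess $1$ over the all-ones tuple, so exactly one entry equals $2$ and the rest equal $1$, and the constraint $k_i\ge2$ forces that entry into position $i$. Hence $S_{k,k-1,i}^{\star}=\zeta_{\A}^{\star}(\underbrace{1,\dots,1}_{i-1},2,\underbrace{1,\dots,1}_{k-i-1})$, and, writing $a=i-1$ and $b=k-i-1$ so that $a+b+2=k$, the lemma becomes equivalent to the evaluation
\[
 \zeta_{\A}^{\star}(\underbrace{1,\dots,1}_{a},2,\underbrace{1,\dots,1}_{b})
 =(-1)^{a}\binom{a+b+2}{a+1}\frac{B_{p-a-b-2}}{a+b+2},
\]
which I would establish for all $a,b\ge0$. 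The case $a+b=1$ is exactly Proposition~\ref{prop:basic}(2), so it both starts and sanity-checks the argument.

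To evaluate the value I would strip off the distinguished index. Writing it as a sum of $1/(m_1\cdots m_{a+b+1})\cdot 1/m_{a+1}$ over $p>m_1\ge\dots\ge m_{a+b+1}\ge1$ and summing over the value $n=m_{a+1}$ first, the indices to the left range over $p>m_1\ge\dots\ge m_a\ge n$ and those to the right over $n\ge m_{a+2}\ge\dots\ge m_{a+b+1}\ge1$, so the sum factors as
\[
 \zeta_{\A}^{\star}(\{1\}^a,2,\{1\}^b)=\sum_{n=1}^{p-1}\frac{1}{n^2}A_a(n)B_b(n),
\]
where $A_a(n)$ and $B_b(n)$ are the complete homogeneous symmetric functions of degrees $a$ and $b$ in the reciprocals $1/n,\dots,1/(p-1)$ and in $1,1/2,\dots,1/n$ respectively. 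Reducing modulo $p$ and using $\{n,\dots,p-1\}\equiv\{-(p-n),\dots,-1\}$ replaces $A_a(n)$ by $(-1)^a$ times the symmetric function of the same degree in $1,1/2,\dots,1/(p-n)$, turning the right-hand side into a convolution of two harmonic-type symmetric sums weighted by $1/n^2$.

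To finish I would package all $(a,b)$ simultaneously through the generating functions $\sum_a A_a(n)x^a=\prod_{j=n}^{p-1}(1-x/j)^{-1}$ and $\sum_b B_b(n)y^b=\prod_{j=1}^{n}(1-y/j)^{-1}$; combined with the identity $\prod_{j=1}^{p-1}(t+j)\equiv t^{p-1}-1$ modulo $p$, these products have closed forms, so the sum over $n$ should collapse to an explicit function of $x$ and $y$ modulo $p$. Extracting the coefficient of $x^ay^b$ and matching it with $(-1)^a\binom{a+b+2}{a+1}B_{p-a-b-2}/(a+b+2)$, via the standard congruences relating ordered power sums modulo $p$ to Bernoulli numbers $B_{p-m}$, would complete the proof. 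I expect this last matching to be the main obstacle: the factor $1/n^2$ splits the summation range at $n$ and thus produces a genuine convolution of two harmonic-type sums rather than a single power sum, and extracting from it both the single Bernoulli number $B_{p-k}$ and the precise binomial coefficient $\binom{k}{i}$ is the technical heart of the argument. A purely algebraic alternative, iterating the harmonic product of Lemma~\ref{lem:recurrence_each}, seems less efficient here, since that product does not close within the family of indices having a single entry equal to $2$.
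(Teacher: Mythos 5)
Your opening reduction is correct and coincides exactly with the paper's first step: $I_{k,k-1,i}$ is a singleton, so $S_{k,k-1,i}^{\star}=\zeta_{\A}^{\star}(\{1\}^{i-1},2,\{1\}^{k-i-1})$, and your restated target with $a=i-1$, $b=k-i-1$ is equivalent to the lemma. From that point on, however, what you offer is a plan rather than a proof, and the step you leave undone is the entire content of the lemma. The paper finishes in one stroke with the duality theorem for finite multiple zeta-star values \cite[Theorem~4.6]{Hoffman}: the index $(\{1\}^{i-1},2,\{1\}^{k-i-1})$ is dual to $(i,k-i)$, so $\zeta_{\A}^{\star}(\{1\}^{i-1},2,\{1\}^{k-i-1})=-\zeta_{\A}^{\star}(i,k-i)$, and Proposition~\ref{prop:basic}~(2) then gives $(-1)^{i-1}\binom{k}{i}\frac{B_{p-k}}{k}$. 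You never invoke this duality (the alternative you consider and reject, iterating the harmonic product of Lemma~\ref{lem:recurrence_each}, is not what the paper does either); instead you propose a direct evaluation and explicitly flag its final step as ``the main obstacle'' and ``the technical heart of the argument.'' A proof whose decisive step is announced but not carried out has a genuine gap.

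The gap is not merely a routine verification left to the reader. Your factorization $\zeta_{\A}^{\star}(\{1\}^a,2,\{1\}^b)=\sum_{n=1}^{p-1}n^{-2}A_a(n)B_b(n)$ and the sign flip $A_a(n)\equiv(-1)^a h_a\bigl(1,\tfrac12,\dots,\tfrac{1}{p-n}\bigr)$ are fine, but the tools you name for the finish cannot deliver it as stated. The identity $\prod_{j=1}^{p-1}(t+j)\equiv t^{p-1}-1\pmod p$ says only that the elementary symmetric functions $e_1,\dots,e_{p-2}$ of $1,2,\dots,p-1$ vanish modulo $p$; no Bernoulli number can be extracted from it. The factor $B_{p-k}$ in the answer enters through strictly deeper congruences -- in effect through depth-two evaluations such as Proposition~\ref{prop:basic}~(2), which rest on mod $p^2$ information about power sums (the results of Hoffman and Zhao). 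So completing your convolution argument would require reducing the weighted sum $\sum_n n^{-2}h_a h_b$ to such depth-two data, which amounts to re-deriving the machinery of \cite{Hoffman} from scratch -- precisely what the duality theorem allows the paper to bypass in three lines. The route you sketch is probably not hopeless, but as written the lemma remains unproved.
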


\begin{proof}
 By the duality theorem for FMZSVs~\cite[Theorem~4.6]{Hoffman}
 and Proposition~\ref{prop:basic} (2), we find that
 \[
  S_{k,k-1,i}^{\star}
  =\zeta_{\A}^{\star}(\underbrace{1,\dots,1}_{i-1},2,\underbrace{1,\dots,1}_{k-i-1})
  =-\zeta_{\A}^{\star}(i,k-i)
  =(-1)^{i-1}\binom{k}{i}\frac{B_{p-k}}{k}.\qedhere
 \]
\end{proof}

\begin{prop}\label{prop:sum_zeta-star}
 For $k,n,i\in\Z$ with $1\le i\le n\le k-1$, we have
 \[
  S_{k,n,i}^{\star}
  =(-1)^{i-1}\biggl((-1)^n\binom{k-1}{i-1}+\binom{k-1}{n-i}\biggr)\frac{B_{p-k}}{k}.
 \]
\end{prop}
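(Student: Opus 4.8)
The plan is to prove the formula by downward induction on $n$, starting from the top value $n=k-1$ and descending to $n=1$, using the star-recurrence of Proposition~\ref{prop:recurrence} together with the initial values supplied by Lemma~\ref{lem:sum_zeta-star_n=k-1}. The point that makes such an induction well-founded is that, when the star-recurrence is solved for the lower-level term, it reads
\[
 S_{k,n-1,i}^{\star}=\frac{1}{k-n}\bigl((n-i)S_{k,n,i}^{\star}+iS_{k,n,i+1}^{\star}\bigr),
\]
and this holds precisely for $1\le i\le n-1$, which is exactly the range of the index $i$ at level $n-1$. Hence complete knowledge of level $n$ determines all of level $n-1$; since $1\le n\le k-1$ we have $k-n\ge1$, so the division is legitimate.

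For the base case $n=k-1$, Lemma~\ref{lem:sum_zeta-star_n=k-1} gives $S_{k,k-1,i}^{\star}=(-1)^{i-1}\binom{k}{i}\frac{B_{p-k}}{k}$, whereas the asserted formula at $n=k-1$ reads $(-1)^{i-1}\bigl((-1)^{k-1}\binom{k-1}{i-1}+\binom{k-1}{i}\bigr)\frac{B_{p-k}}{k}$, after using $\binom{k-1}{(k-1)-i}=\binom{k-1}{i}$. By Pascal's rule $\binom{k}{i}=\binom{k-1}{i-1}+\binom{k-1}{i}$, the two expressions differ only by $\bigl((-1)^{k-1}-1\bigr)\binom{k-1}{i-1}\frac{B_{p-k}}{k}$. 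This difference vanishes: if $k$ is odd then the scalar $(-1)^{k-1}-1$ is zero, while if $k$ is even then $p-k$ is odd (as $p$ is odd) and at least $3$ for all large $p$, so $B_{p-k}=0$ and hence $\frac{B_{p-k}}{k}=0$ in $\A$. Thus the base case is consistent with the formula.

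For the inductive step I would assume the formula at level $n$, substitute the resulting expressions for $S_{k,n,i}^{\star}$ and $S_{k,n,i+1}^{\star}$ into the displayed recurrence, and factor out $(-1)^{i-1}\frac{B_{p-k}}{k}$. Separating the terms carrying the sign $(-1)^n$ from the remaining terms, the desired value of $S_{k,n-1,i}^{\star}$ reduces to the two scalar identities $(k-i)\binom{k-1}{i-1}=i\binom{k-1}{i}$ and $(n-i)\binom{k-1}{n-i}=(k-(n-i))\binom{k-1}{n-i-1}$, each of which is an instance of the absorption identity $m\binom{k-1}{m}=(k-m)\binom{k-1}{m-1}$. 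Carrying out these substitutions and matching coefficients yields exactly the claimed formula at level $n-1$.

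I expect no serious obstacle: the argument is a clean descending induction, and the only genuine subtlety is the parity bookkeeping in the base case, where the apparent discrepancy from Pascal's rule is absorbed by the vanishing of $B_{p-k}$ for even $k$ (equivalently, by the identity being multiplied by the class $\frac{B_{p-k}}{k}$, which is $0$ in $\A$ in that case). The remaining labor is the routine sign-and-binomial manipulation of the inductive step, governed entirely by the single absorption identity above.
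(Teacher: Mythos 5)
Your proposal is correct and follows essentially the same route as the paper: backward induction on $n$ starting from the initial values of Lemma~\ref{lem:sum_zeta-star_n=k-1}, with the base-case parity discrepancy absorbed by the vanishing of $B_{p-k}$ for even $k$ (via Pascal's rule) and the inductive step carried out through the star-recurrence of Proposition~\ref{prop:recurrence} using exactly the absorption identities $i\binom{k-1}{i}=(k-i)\binom{k-1}{i-1}$ and $(n-i)\binom{k-1}{n-i}=(k-n+i)\binom{k-1}{n-i-1}$ that underlie the paper's computation. The only cosmetic difference is that you solve the recurrence for $S_{k,n-1,i}^{\star}$ by dividing by $k-n$, whereas the paper keeps the factor $(k-n)$ on both sides; both are legitimate since $1\le k-n<p$ for all but finitely many primes $p$.
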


\begin{proof}
 The proof is by backward induction on $n$.

 We first consider the case $n=k-1$.
 If $k$ is even,
 then the identity trivially follows from Lemma~\ref{lem:sum_zeta-star_n=k-1}
 because $B_{p-k}=0$ (in $\Q$ and so in $\Z/p\Z$ as well) whenever
 $p$ is a prime at least $k+3$.
 If $k$ is odd, then the identity again follows
 from Lemma~\ref{lem:sum_zeta-star_n=k-1} because
 \[
  (-1)^n\binom{k-1}{i-1}+\binom{k-1}{n-i}
  =\binom{k-1}{i-1}+\binom{k-1}{k-i-1}=\binom{k}{i}.
 \]

 Now assume that the identity holds for $n$.
 Then Proposition~\ref{prop:recurrence} shows that
 \begin{align*}
  (k-n)S_{k,n-1,i}^{\star}
  &=(n-i)S_{k,n,i}^{\star}+iS_{k,n,i+1}^{\star}\\
  &=(n-i)(-1)^{i-1}\biggl((-1)^n\binom{k-1}{i-1}+\binom{k-1}{n-i}\biggr)\frac{B_{p-k}}{k}\\
  &\qquad+i(-1)^i\biggl((-1)^n\binom{k-1}{i}+\binom{k-1}{n-i-1}\biggr)\frac{B_{p-k}}{k}\\
  &=(-1)^{i-1}\biggl((n-i)(-1)^n\binom{k-1}{i-1}+(k-n+i)\binom{k-1}{n-i-1}\biggr)
    \frac{B_{p-k}}{k}\\
  &\qquad+(-1)^i\biggl((k-i)(-1)^n\binom{k-1}{i-1}+i\binom{k-1}{n-i-1}\biggr)
   \frac{B_{p-k}}{k}\\
  &=(k-n)(-1)^{i-1}\biggl((-1)^{n-1}\binom{k-1}{i-1}+\binom{k-1}{n-i-1}\biggr)
    \frac{B_{p-k}}{k}.
 \end{align*}
 Therefore the identity holds for $n-1$ as well and the proof is complete.
\end{proof}

\subsection{Computation of $S_{k,n,i}$}
Observe that each (F)MZV can be written as a $\Z$-linear combination of (F)MZSVs
and vice versa, an example being
\begin{align*}
 \zeta_{\A}(k_1,k_2,k_3)
 &=\sum_{m_1>m_2>m_3}\frac{1}{m_1^{k_1}m_2^{k_2}m_3^{k_3}}\\
 &=\Biggl(\sum_{m_1\ge m_2\ge m_3}-\sum_{m_1=m_2\ge m_3}
   -\sum_{m_1\ge m_2=m_3}+\sum_{m_1=m_2=m_3}\Biggr)\frac{1}{m_1^{k_1}m_2^{k_2}m_3^{k_3}}\\
 &=\zeta_{\A}^{\star}(k_1,k_2,k_3)-\zeta_{\A}^{\star}(k_1+k_2,k_3)
   -\zeta_{\A}^{\star}(k_1,k_2+k_3)+\zeta_{\A}^{\star}(k_1+k_2+k_3),
\end{align*}
where $m_1$, $m_2$, and $m_3$ are all assumed to be positive integers less than $p$.

\begin{lem}\label{lem:S_S-star}
 For $k,n\in\Z$ with $1\le n\le k-1$, we have
 \[
  S_{k,n,1}=\sum_{j=0}^{n-1}(-1)^j\binom{k-n+j-1}{j}S_{k,n-j,1}^{\star}.
 \]
\end{lem}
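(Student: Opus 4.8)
The plan is to convert each finite multiple zeta value occurring in $S_{k,n,1}$ into a $\Z$-linear combination of finite multiple zeta-star values, exactly as in the displayed $n=3$ example, and then to collect terms. In general, writing $[m_i>m_{i+1}]=[m_i\ge m_{i+1}]-[m_i=m_{i+1}]$ for each of the $n-1$ consecutive pairs and expanding the resulting product yields
\[
 \zeta_{\A}(k_1,\dots,k_n)
 =\sum_{(B_1,\dots,B_m)}(-1)^{n-m}
  \zeta_{\A}^{\star}\Bigl(\textstyle\sum_{i\in B_1}k_i,\dots,\sum_{i\in B_m}k_i\Bigr),
\]
where $(B_1,\dots,B_m)$ ranges over all ways of grouping $\{1,\dots,n\}$ into $m$ blocks of consecutive indices, and the sign $(-1)^{n-m}$ records that $n-m$ of the inequalities have been turned into equalities. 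Since the first block always contains the index $1$, the first argument of every resulting star value is at least $k_1\ge2$, so each term is indexed by a tuple belonging to $I_{k,m}$ for some $1\le m\le n$.

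Next I would substitute this expansion into $S_{k,n,1}$ and interchange the order of summation, grouping the terms according to their star index $(l_1,\dots,l_m)\in I_{k,m}$. The crux is then a purely combinatorial count: for a fixed target $(l_1,\dots,l_m)\in I_{k,m}$, I must determine the number $N$ of pairs consisting of a tuple $(k_1,\dots,k_n)\in I_{k,n}$ together with a consecutive block decomposition collapsing it to $(l_1,\dots,l_m)$. Choosing block sizes $s_1,\dots,s_m\ge1$ with $s_1+\dots+s_m=n$, the entries inside block $j$ form a composition of $l_j$ into $s_j$ positive parts, subject to the extra requirement that the very first entry be at least $2$ in block $1$.

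I expect this count to be the main obstacle, because a priori $N$ might depend on the individual $l_j$. The cleanest way to dispose of it is by generating functions that mark each block size by a variable $t$: the generating function $\sum_{s\ge1}(\text{number of such compositions})\,t^s$ equals $t(1+t)^{l_1-2}$ for the constrained first block and $t(1+t)^{l_j-1}$ for each unrestricted block $j\ge2$. Multiplying over the $m$ blocks gives $t^m(1+t)^{k-m-1}$, since $l_1+\dots+l_m=k$, so that the dependence on the individual $l_j$ cancels; extracting the coefficient of $t^n$ then shows that $N=\binom{k-m-1}{n-m}$ for every $(l_1,\dots,l_m)\in I_{k,m}$.

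Finally I would assemble the pieces. Summing $\zeta_{\A}^{\star}(l_1,\dots,l_m)$ over all $(l_1,\dots,l_m)\in I_{k,m}$ produces $S_{k,m,1}^{\star}$, each weighted by the sign $(-1)^{n-m}$ and the constant multiplicity $\binom{k-m-1}{n-m}$, so that
\[
 S_{k,n,1}=\sum_{m=1}^{n}(-1)^{n-m}\binom{k-m-1}{n-m}S_{k,m,1}^{\star}.
\]
The substitution $j=n-m$, under which $k-m-1=k-n+j-1$ and $(-1)^{n-m}=(-1)^{j}$, turns this into the asserted identity, completing the argument.
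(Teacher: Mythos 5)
Your proposal is correct and follows essentially the same route as the paper: both expand each $\zeta_{\A}$ into a signed sum of $\zeta_{\A}^{\star}$'s by turning each strict inequality into a weak inequality minus an equality, interchange summation, and show that each target index in $I_{k,m,1}$ occurs with the uniform multiplicity $\binom{k-m-1}{n-m}$. The only difference is cosmetic: the paper derives this multiplicity by a stars-and-bars count (adding $j$ bars to the gaps, avoiding the leftmost gap), whereas you extract it as the coefficient of $t^n$ in $t^m(1+t)^{k-m-1}$ via per-block generating functions, which yields the same number.
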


\begin{proof}
 Each $\zeta_{\A}(k_1,\dots,k_n)$, where $(k_1,\dots,k_n)\in I_{k,n,1}$,
 can be written as a sum of the values of the form
 $(-1)^j\zeta_{\A}^{\star}(k_1',\dots,k_{n-j}')$
 where $j=0,\dots,n-1$ and $(k_1',\dots,k_{n-j}')\in I_{k,n-j,1}$.
 Moreover, each $(k_1',\dots,k_{n-j}')\in I_{k,n-j,1}$ appears in this manner
 exactly as many times as there are ways of adding $j$ bars
 to the $n-j-1$ existing bars in the gaps in the following sequence of stars,
 in such a way that no bar separates the leftmost two stars
 and no two bars are in the same gap:
 \[
  \underbrace{\fbox{$\star\star$}\cdots\star}_{k_1'}\vert\cdots\vert
  \underbrace{\star\cdots\star}_{k_{n-j}'}
 \]
 Since there are $(k_1'-2)+(k_2'-1)+\dots+(k_{n-j}'-1)=k-n+j-1$ gaps that accept bars,
 the number of ways is $\binom{k-n+j-1}{j}$.
\end{proof}

\begin{lem}[Initial values]\label{lem:sum_zeta_i=1}
 For $k,n\in\Z$ with $1\le n\le k-1$, we have
 \[
  S_{k,n,1}
  =\biggl(1+(-1)^n\binom{k-1}{n-1}\biggr)\frac{B_{p-k}}{k}.
 \]
\end{lem}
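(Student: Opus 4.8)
The plan is to combine the two results just established. Lemma~\ref{lem:S_S-star} expresses $S_{k,n,1}$ as a $\Z$-linear combination of the starred sums $S_{k,m,1}^{\star}$, while Proposition~\ref{prop:sum_zeta-star} with $i=1$ evaluates each such starred sum explicitly as
\[
 S_{k,m,1}^{\star}=\biggl((-1)^{m}+\binom{k-1}{m-1}\biggr)\frac{B_{p-k}}{k}.
\]
Substituting this (with $m=n-j$) into the formula of Lemma~\ref{lem:S_S-star} and factoring out $B_{p-k}/k$, I would reduce the claim to the purely combinatorial identity
\[
 \sum_{j=0}^{n-1}(-1)^{j}\binom{k-n+j-1}{j}\biggl((-1)^{n-j}+\binom{k-1}{n-j-1}\biggr)
 =1+(-1)^{n}\binom{k-1}{n-1}.
\]

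Next I would split the left-hand side into the two sums $A$ and $B$ arising from the two terms in the inner parenthesis. For $A=\sum_{j=0}^{n-1}(-1)^{j}\binom{k-n+j-1}{j}(-1)^{n-j}$ the sign collapses to $(-1)^{n}$, leaving $A=(-1)^{n}\sum_{j=0}^{n-1}\binom{k-n+j-1}{j}$; the hockey-stick identity then evaluates the remaining sum as $\binom{k-1}{n-1}$, so that $A=(-1)^{n}\binom{k-1}{n-1}$, exactly the second term on the right.

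The crux is therefore the identity $B=1$, where $B=\sum_{j=0}^{n-1}(-1)^{j}\binom{k-n+j-1}{j}\binom{k-1}{n-1-j}$. I would handle this by generating functions: since $(1+x)^{-(k-n)}=\sum_{j\ge0}(-1)^{j}\binom{k-n+j-1}{j}x^{j}$ and $(1+x)^{k-1}=\sum_{m\ge0}\binom{k-1}{m}x^{m}$, the sum $B$ is precisely the coefficient of $x^{n-1}$ in the product $(1+x)^{-(k-n)}(1+x)^{k-1}=(1+x)^{n-1}$, which equals $\binom{n-1}{n-1}=1$. This is the step I expect to demand the most care, chiefly in verifying that the range $0\le j\le n-1$ of the convolution coincides with the stated summation range and that the binomial coefficients appearing are the ordinary (nonnegative-entry) ones. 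Adding $A$ and $B$ then recovers the right-hand side, completing the proof.
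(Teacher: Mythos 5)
Your proposal is correct and follows essentially the same route as the paper: substitute Proposition~\ref{prop:sum_zeta-star} (with $i=1$) into Lemma~\ref{lem:S_S-star}, split the sum into two pieces, and evaluate the second via the coefficient of $x^{n-1}$ in $(1+x)^{-(k-n)}(1+x)^{k-1}=(1+x)^{n-1}$. The only cosmetic difference is that you evaluate the first piece by the hockey-stick identity where the paper extracts the same coefficient from $(1-x)^{-(k-n)}(1-x)^{-1}$; these are the same computation.
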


\begin{proof}
 By Proposition~\ref{prop:sum_zeta-star} and Lemma~\ref{lem:S_S-star},
 we have
 \begin{align*}
  S_{k,n,1}
  &=\sum_{j=0}^{n-1}(-1)^j\binom{k-n+j-1}{j}
   \biggl((-1)^{n-j}+\binom{k-1}{n-j-1}\biggr)\frac{B_{p-k}}{k}\\
  &=\Biggl((-1)^n\sum_{j=0}^{n-1}\binom{k-n+j-1}{j}
    +\sum_{j=0}^{n-1}(-1)^j\binom{k-n+j-1}{j}\binom{k-1}{n-j-1}\Biggr)\frac{B_{p-k}}{k}.
 \end{align*}

 Recall that $(1-x)^{-m}=\sum_{j=0}^{\infty}\binom{m+j-1}{j}x^j\in\Q[[x]]$
 for $m\in\Z_{\ge1}$.
 Looking at the coefficient of $x^{n-1}$ in the product of
 $(1-x)^{-(k-n)}$ and $(1-x)^{-1}$ gives
 \[
  \sum_{j=0}^{n-1}\binom{(k-n)+j-1}{j}=\binom{(k-n+1)+(n-1)-1}{n-1}=\binom{k-1}{n-1};
 \]
 looking at the coefficient of $x^{n-1}$ in the product of
 $(1+x)^{-(k-n)}$ and $(1+x)^{k-1}$ gives
 \[
  \sum_{j=0}^{n-1}(-1)^j\binom{k-n+j-1}{j}\binom{k-1}{n-j-1}=1.
 \]
 The proof is now complete.
\end{proof}

\begin{prop}\label{prop:sum_zeta}
 For $k,n,i\in\Z$ with $1\le i\le n\le k-1$, we have
 \[
  S_{k,n,i}
  =(-1)^{i-1}\biggl(\binom{k-1}{i-1}+(-1)^n\binom{k-1}{n-i}\biggr)\frac{B_{p-k}}{k}.
 \]
\end{prop}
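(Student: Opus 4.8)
The plan is to prove the formula by induction on $i$, in parallel with the proof of Proposition~\ref{prop:sum_zeta-star} but with the roles of level and index interchanged. The starred computation ran a backward induction on $n$ starting from the initial values at $n=k-1$ supplied by Lemma~\ref{lem:sum_zeta-star_n=k-1}; here the available initial data are instead the values $S_{k,n,1}$ for \emph{every} admissible $n$, furnished by Lemma~\ref{lem:sum_zeta_i=1}, so it is natural to induct on $i$. The base case $i=1$ is exactly Lemma~\ref{lem:sum_zeta_i=1}, and it agrees with the claimed formula upon setting $i=1$, since then $\binom{k-1}{i-1}=\binom{k-1}{0}=1$.

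For the inductive step I would suppose the formula holds for some index $i$ and all levels $n$ with $i\le n\le k-1$. Because $i\ge1$, the first recurrence of Proposition~\ref{prop:recurrence} can be solved for the next index, giving
\[
 iS_{k,n,i+1}=-(n-i)S_{k,n,i}-(k-n)S_{k,n-1,i}
\]
for every $n$ with $i+1\le n\le k-1$. Throughout this range one has $i\le n-1$, so both $S_{k,n,i}$ and $S_{k,n-1,i}$ carry the index $i$ and fall under the induction hypothesis (including the diagonal instance $S_{k,i,i}$ that occurs when $n=i+1$). Substituting the two known expressions and factoring out $(-1)^{i-1}B_{p-k}/k$ then reduces the claim for $S_{k,n,i+1}$ to a purely combinatorial identity.

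That identity splits into the part independent of $n$ and the part carrying the sign $(-1)^n$. For the first part, the coefficients of $\binom{k-1}{i-1}$ combine as $-(n-i)-(k-n)=-(k-i)$, and the absorption identity $(k-i)\binom{k-1}{i-1}=i\binom{k-1}{i}$ matches this against the required term $-i\binom{k-1}{i}$. For the $(-1)^n$ part, using $(-1)^{n-1}=-(-1)^n$ leaves the requirement $(n-i)\binom{k-1}{n-i}=(k-n+i)\binom{k-1}{n-i-1}$, which is again absorption (with $a=n-i$ and upper index $k-1$, since $a\binom{k-1}{a}=(k-a)\binom{k-1}{a-1}$ and $k-a=k-n+i$). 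Combining the two pieces yields
\[
 iS_{k,n,i+1}=i(-1)^i\bigl(\tbinom{k-1}{i}+(-1)^n\tbinom{k-1}{n-i-1}\bigr)\frac{B_{p-k}}{k},
\]
and dividing by $i$ gives precisely the formula with $i$ replaced by $i+1$, completing the step and hence the induction.

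I expect the only delicate point—the main obstacle—to be the bookkeeping in this last step: tracking the signs $(-1)^{i-1}$ versus $(-1)^i$ and $(-1)^{n-1}$ versus $(-1)^n$ when regrouping, and confirming that every binomial coefficient invoked stays within the admissible range $i+1\le n\le k-1$ so that the induction hypothesis genuinely applies. No input beyond Proposition~\ref{prop:recurrence}, Lemma~\ref{lem:sum_zeta_i=1}, and the two absorption identities is required.
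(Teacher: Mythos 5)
Your proposal is correct and follows essentially the same route as the paper: induction on $i$ with Lemma~\ref{lem:sum_zeta_i=1} as the base case, the recurrence of Proposition~\ref{prop:recurrence} solved for $iS_{k,n,i+1}$, and the two absorption identities $(k-i)\binom{k-1}{i-1}=i\binom{k-1}{i}$ and $(n-i)\binom{k-1}{n-i}=(k-n+i)\binom{k-1}{n-i-1}$ to close the algebra. Your attention to the range condition ensuring $S_{k,n-1,i}$ falls under the induction hypothesis is exactly the bookkeeping the paper's proof relies on implicitly.
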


\begin{proof}
 The proof is by induction on $i$,
 the case $i=1$ being Lemma~\ref{lem:sum_zeta_i=1}.
 Assume that the identity holds for $i$.
 Then Proposition~\ref{prop:recurrence} shows that
 \begin{align*}
  iS_{k,n,i+1}
  &=-(n-i)S_{k,n,i}-(k-n)S_{k,n-1,i}\\
  &=-(n-i)(-1)^{i-1}\biggl(\binom{k-1}{i-1}+(-1)^n\binom{k-1}{n-i}\biggr)\frac{B_{p-k}}{k}\\
  &\qquad-(k-n)(-1)^{i-1}
   \biggl(\binom{k-1}{i-1}+(-1)^{n-1}\binom{k-1}{n-i-1}\biggr)\frac{B_{p-k}}{k}\\
  &=-(-1)^{i-1}\biggl((n-i)\binom{k-1}{i-1}+(k-n+i)(-1)^n\binom{k-1}{n-i-1}\biggr)
   \frac{B_{p-k}}{k}\\
  &\qquad-(-1)^{i-1}
   \biggl((k-n)\binom{k-1}{i-1}+(k-n)(-1)^{n-1}\binom{k-1}{n-i-1}\biggr)\frac{B_{p-k}}{k}\\
  &=(-1)^i\Biggl((k-i)\binom{k-1}{i-1}
    +i(-1)^n\binom{k-1}{n-i-1}\Biggr)
    \frac{B_{p-k}}{k}\\
  &=i(-1)^i\biggl(\binom{k-1}{i}+(-1)^n\binom{k-1}{n-i-1}\biggr)\frac{B_{p-k}}{k}.
 \end{align*}
 Therefore the identity holds for $i+1$ as well and the proof is complete.
\end{proof}

Combining Propositions~\ref{prop:sum_zeta-star} and \ref{prop:sum_zeta},
we have completed the proof of the main theorem (Theorem~\ref{thm:main}).

\section*{Acknowledgements}
The authors would like to thank Masanobu Kaneko and Tatsushi Tanaka
for helpful comments, and Hiroki Kondo for carefully reading the manuscript.


\begin{thebibliography}{99}
 \bibitem{Granville}
  A. Granville,
  \emph{A decomposition of Riemann's zeta-function},
  Analytic number theory (Kyoto, 1996),
  London Math. Soc. Lecture Note Ser., vol.~247,
  Cambridge Univ. Press, Cambridge, 1997, pp.~95--101.
 \bibitem{Hoffman92}
  M. E. Hoffman,
  \emph{Multiple harmonic series},
  Pacific J. Math. \textbf{152} (1992), no.~2, 275--290.
 \bibitem{Hoffman}
  M. E. Hoffman,
  \emph{Quasi-symmetric functions and mod $p$ multiple harmonic sums},
  preprint, arXiv:math/0401319.
 \bibitem{Hoffman_algebra}
  M. E. Hoffman,
  \emph{The algebra of multiple harmonic series},
  J. Algebra \textbf{194} (1997), no.~2, 477--495.
 \bibitem{Kaneko}
  M. Kaneko,
  \emph{Finite multiple zeta values mod $p$ and relations among multiple zeta values},
  S\=urikaisekikenky\=usho K\=oky\=uroku (2012), no.~1813, 27--31,
  Aspects of multiple zeta values (Japanese) (Kyoto, 2010).
 \bibitem{KZ}
  M. Kaneko and D. Zagier,
  \emph{Finite multiple zeta values},
  in preparation.
 \bibitem{SW}
  S. Saito and N. Wakabayashi,
  \emph{The Bowman-Bradley type theorem for finite multiple zeta values},
  preprint, arXiv:1304.2608.
 \bibitem{Zhao}
  J. Zhao,
  \emph{Wolstenholme type theorem for multiple harmonic sums},
  Int. J. Number Theory \textbf{4} (2008), no.~1, 73--106.
\end{thebibliography}
\end{document}